\theoremstyle{definition}
\newtheorem{mydef}{Definition}[section]
\newtheorem{lem}[mydef]{Lemma}
\newtheorem{thm}[mydef]{Theorem}
\newtheorem{cor}[mydef]{Corollary}
\newtheorem{defin}[mydef]{Definition}
\newtheorem{remark}[mydef]{Remark}
\newtheorem{fact}[mydef]{Fact}
\newcommand{\fct}[2]{{}^{#1}#2}
\newcommand{\sea}{\mathfrak{C}}
\newcommand{\seq}[1]{\langle #1 \rangle}
\newcommand{\rest}{\upharpoonright}
\newcommand{\leap}[1]{\le_{#1}}
\newcommand{\lea}{\leap{\K}}
\newcommand{\K}{\mathbf{K}}
\newbox\noforkbox \newdimen\forklinewidth
\noforkbox\hbox{\lower 2pt\box1\lower
2pt\box0\relax}
\def\unionstick{\mathop{\copy\noforkbox}\limits}
\def\1nf{\unionstick^{(1)}}
\def\2nf{\unionstick^{(2)}}
\def\3nf{\unionstick^{(3)}}
\newcommand{\gtp}{\text{gtp}}
\newcommand{\gS}{\text{gS}}
\newcommand{\ehanf}[1]{\beth_{\left(2^{#1}\right)^+}}
\newcommand{\LS}{\text{LS}}
\title{On the uniqueness property of forking in abstract elementary classes}
\date{\today\\
AMS 2010 Subject Classification: Primary 03C48. Secondary: 03C45, 03C52, 03C55, 03C75.
}
\keywords{abstract elementary classes, superstability, forking, splitting, uniqueness of forking, symmetry}
\author{Sebastien Vasey}
\address{Department of Mathematical Sciences \\ Carnegie Mellon University \\ Pittsburgh, Pennsylvania, USA}
\email{sebv@cmu.edu}
\urladdr{http://math.cmu.edu/\textasciitilde svasey/}
\begin{document}

\begin{abstract}
  In the setup of abstract elementary classes satisfying a local version of superstability, we prove the uniqueness property for $\mu$-forking, a certain independence notion arising from splitting. This had been a longstanding technical difficulty when constructing forking-like notions in this setup. As an application, we show that the two versions of forking symmetry appearing in the literature (the one defined by Shelah for good frames and the one defined by VanDieren for splitting) are equivalent. 
\end{abstract}

\maketitle

\tableofcontents

\section{Introduction}

In the study of classification theory for abstract elementary classes (AECs), the question of when a forking-like notion exists is central. The present paper is a contribution to this problem.

To state our result more precisely, we first recall that there is a semantic notion of type in AECs: for the rest of this introduction we fix an AEC $\K$ with amalgamation, joint embedding, and arbitrarily large models. This allows us to fix a big universal model-homogeneous\footnote{$M$ is \emph{model-homogeneous} if whenever $M_0 \lea N_0$ are such that $M_0 \lea M$ and $\|N_0\| < \|M\|$, then $N_0$ embeds inside $M$ over $M_0$.} monster model $\sea$ and work inside it. For $M \lea \sea$ and $a \in \sea$, let $\gtp (a / M)$ (the Galois, or orbital, type of $a$ over $M$) be the orbit of $a$ under the automorphisms of $\sea$ fixing $M$ (Galois types can be defined without any assumptions on $\K$, but then the definition becomes more technical). Write $\gS (M)$ for the set of all Galois types over $M$. The definitions of stability and saturation are as expected. Two important results of Shelah are:

\begin{enumerate}
\item \cite[II.1.14]{shelahaecbook} If $M$ is saturated, then $M$ is model-homogeneous.
  \item \cite[II.1.16]{shelahaecbook} If $\K$ is stable in $\mu$ and $M \in \K_\mu$, then there exists $N \in \K_\mu$ universal over $M$.
\end{enumerate}

To motivate the main result of this paper, let us first consider the following consequence:

\begin{cor}\label{easy-cor}
  Let $\K$ be an AEC with amalgamation, joint embedding, and arbitrarily large models. Let $\LS (\K) < \mu < \lambda$ be given. If $\K$ is categorical in $\lambda$, then there is a relation ``$p$ does not $\mu$-fork over $M$'' defined for $M \lea N$ both \emph{saturated models} in $\K_\mu$ and $p \in \gS (N)$ satisfying:

  \begin{enumerate}
  \item\label{cond-1} The usual invariance and monotonicity properties.
  \item\label{cond-2} Existence-extension: for $M \lea N$ both saturated in $\K_\mu$, any $p \in \gS (M)$ has a $\mu$-nonforking extension to $\gS (N)$.
  \item\label{cond-3} Uniqueness\footnote{This can also be described as ``types over saturated models are stationary''.}: for $M \lea N$ both saturated in $\K_\mu$, if $p, q \in \gS (N)$ do not $\mu$-fork over $M$ and $p \rest M = q \rest M$, then $p = q$.
  \item\label{cond-4} Symmetry: for $M$ saturated in $\K_\mu$ and $a, b \in \sea$, the following are equivalent:
    \begin{enumerate}
    \item There exists $M_a$ saturated in $\K_\mu$ containing $a$ such that $M \lea M_a$ and $\gtp (b / M_a)$ does not $\mu$-fork over $M$.
    \item There exists $M_b$ saturated in $\K_\mu$ containing $b$ such that $M \lea M_b$ and $\gtp (a / M_b)$ does not $\mu$-fork over $M$.
    \end{enumerate}
  \item\label{cond-5} Local character for universal chains: if $\delta < \mu^+$ is a limit ordinal, $\seq{M_i : i \le \delta}$ is an increasing continuous sequence of saturated models in $\K_\mu$ with $M_{i + 1}$ universal over $M_i$ for all $i < \delta$, then for any $p \in \gS (M_\delta)$ there exists $i < \delta$ such that $p$ does not $\mu$-fork over $M_i$.

  \end{enumerate}
\end{cor}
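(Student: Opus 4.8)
The plan is to derive the corollary by assembling the uniqueness theorem that forms the main result of this paper with facts about $\mu$-splitting that are already available, after first checking that the categoricity hypothesis supplies the local superstability those facts require. The starting point is that, under amalgamation and joint embedding, categoricity in $\lambda$ with $\LS(\K) < \mu < \lambda$ forces $\K$ to be stable in $\mu$ and, more strongly, $\mu$-superstable: stability in $\mu$, amalgamation and no maximal models in $\K_\mu$, and local character for $\mu$-splitting along increasing universal chains. This is essentially due to Shelah and is packaged in the form I need in the work of Grossberg--VanDieren and in my earlier papers. In particular there is a saturated model in $\K_\mu$, any two are isomorphic, and by the second cited result of Shelah above every model in $\K_\mu$ has a universal extension, so the saturated models of size $\mu$ form a rich enough subclass to carry the argument.

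I would then fix the definition that makes the statement precise: for saturated $M \lea N$ in $\K_\mu$ and $p \in \gS(N)$, declare that $p$ \emph{does not $\mu$-fork over $M$} if there is $M_0 \lea M$ in $\K_\mu$ over which $M$ is universal and such that $p$ does not $\mu$-split over $M_0$; since $M$ is saturated this universality holds for every such $M_0$, so the condition is just nonsplitting over some $M_0 \lea M$ in $\K_\mu$. With this definition property (\ref{cond-1}) is immediate, invariance coming from the behaviour of splitting under isomorphisms and monotonicity from that of nonsplitting. Property (\ref{cond-2}) follows from stability (every type has a nonsplitting base) together with the existence of nonsplitting extensions over universal models, and property (\ref{cond-5}) is exactly the local-character clause of $\mu$-superstability; both are standard under the hypotheses of the first paragraph, so I would cite the existing proofs rather than reproduce them.

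The substance of the corollary is property (\ref{cond-3}): two $\mu$-nonforking extensions of the same type over a saturated base of size $\mu$ coincide. This is precisely the uniqueness theorem proved in the body of the paper, which I would invoke directly. It is genuinely stronger than the classical uniqueness of nonsplitting extensions, because the model $N$ need not be universal over the witnessing base $M_0$ and the two types may be witnessed by different bases, and reconciling this is the longstanding difficulty the paper resolves.

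For the symmetry property (\ref{cond-4}) I would appeal to the application announced in the abstract. Once uniqueness is in hand, the $\mu$-forking relation on saturated models in $\K_\mu$ satisfies the good-frame axioms, and the symmetry notion of Shelah for good frames is equivalent to VanDieren's symmetry for splitting; the latter is exactly the biconditional in (\ref{cond-4}). Since categoricity yields one of the two forms -- through uniqueness of limit models, or directly as Shelah-style symmetry in the good frame just built -- the equivalence transfers it to the stated form. The main obstacle is not this final assembling but the two imported deep facts: the uniqueness theorem itself, and the verification that categoricity genuinely delivers the full $\mu$-superstability package, including that the saturated models of size $\mu$ are closed under the unions and universal chains over which the frame axioms and clause (\ref{cond-5}) quantify.
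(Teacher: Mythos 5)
Your skeleton is the same as the paper's: get $\mu$-superstability from categoricity via Shelah--Villaveces (Fact \ref{shvi}), take $\mu$-forking to be Definition \ref{forking-def}, read off (\ref{cond-1}), (\ref{cond-2}) and (\ref{cond-5}) from Fact \ref{ext-uq} and no long splitting chains, quote Theorem \ref{main-thm} for (\ref{cond-3}), and import symmetry for (\ref{cond-4}). But three points need repair. First, your parenthetical claim that for saturated $M \in \K_\mu$ ``this universality holds for every such $M_0$'' is false: no model is universal over itself (an embedding of a proper $\lea$-extension $N$ of $M$ into $M$ over $M$ would have image containing $M$ and contained in $M$, forcing $N = M$), and for a proper counterexample take algebraically closed fields with $M$ saturated of size $\mu$ and $M_0 \lea M$ the algebraic closure of all but one element of a transcendence basis, so that $M$ has transcendence degree $1$ over $M_0$. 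Saturation only controls Galois types over submodels of size $< \mu$, not universality over submodels of size $\mu$. You state the correct definition before this aside and never use the false simplification, so the error is excisable, but as written it is wrong.

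Second, Theorem \ref{main-thm} is proved for \emph{limit} models, while the corollary quantifies over \emph{saturated} models, and you never bridge the two; neither direction of ``saturated equals limit in $\K_\mu$'' is formal, and the paper supplies it by citation to \cite[5.7]{categ-saturated-v3}. The same identification is what lets you find, inside a saturated $M$, a witness pair $(M_0, M)$ with $M$ universal over $M_0$, which you need before Fact \ref{ext-uq} gives (\ref{cond-2}). Third, your derivation of (\ref{cond-4}) is circular: symmetry is itself one of the frame axioms, so it cannot be extracted from ``the $\mu$-forking relation satisfies the good-frame axioms once uniqueness is in hand.'' The paper explicitly records that it is not known whether symmetry follows from $\mu$-superstability; in the proof of the corollary, (\ref{cond-4}) is a separately imported deep fact (\cite[5.7(1)]{categ-saturated-v3}, resting on the VanDieren--Vasey symmetry transfer from categoricity), not a consequence of Theorem \ref{main-thm}. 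So your closing inventory of ``two imported deep facts'' is short by one, and that missing import (symmetry, hence uniqueness of limit models) is also exactly what underwrites the saturated-equals-limit identification you need to apply the uniqueness theorem at all.
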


We give a proof at the end of this introduction. Several remarks are in order.

First remark: we work only over models of a fixed cardinality, so we deal with a (potentially) different nonforking relation for each cardinal $\mu$. Note in particular that the uniqueness property is for types over models of the same size, so there are no obvious relationships between $\mu_0$-forking and $\mu_1$-forking (for $\LS (\K) < \mu_0 < \mu_1 < \lambda$).

Second remark: we work only over \emph{saturated} models. We do not know how to generalize our result to \emph{all} models of cardinality $\mu$. It is worth mentioning that in the setup of Corollary \ref{easy-cor} the $\mu$-saturated models are closed under unions \cite[5.7(3)]{categ-saturated-v3}. In fact they form an AEC with Löwenheim-Skolem-Tarski number $\mu$.

Third remark: it is known (using an argument of Morley, see \cite[I.1.7(a)]{sh394}) that in the setup of Corollary \ref{easy-cor}, $\K$ is stable in $\mu$. Moreover (\ref{cond-5}) can be seen as a version of superstability: it is a replacement for ``every type does not fork over a finite set''. In fact (\ref{cond-5}) is equivalent to superstability if $\K$ is first-order axiomatizable \cite{gv-superstability-v5-toappear}.

Fourth remark: if we strengthen condition (\ref{cond-5}) to:

\begin{itemize}
  \item[(\ref{cond-5}+)] Local character: if $\delta < \mu^+$ is a limit ordinal, $\seq{M_i : i \le \delta}$ is an increasing continuous sequence of saturated models in $\K_\mu$, then for any $p \in\gS (M_\delta)$ there exists $i < \delta$ such that $p$ does not $\mu$-fork over $M_i$.
\end{itemize}

(note the difference with (\ref{cond-5}): we do \emph{not} require that $M_{i + 1}$ be universal over $M_i$) then we have arrived to Shelah's definition of a (type-full) good $\mu$-frame \cite[Definition II.2.1]{shelahaecbook}. Good frames are the main concept in Shelah's books \cite{shelahaecbook, shelahaecbook2} on classification theory for AECs. They have several applications, including the author's proof of the eventual categoricity conjecture for universal classes \cite{ap-universal-v11-toappear,categ-universal-2-selecta}. Thus the existence question for them is important. 

Fifth remark: if we add to the assumptions of Corollary \ref{easy-cor} that Galois types over saturated models of size $\mu$ are determined by their restrictions to model of size $\chi$, for some $\chi < \mu$ (this is called weak tameness in the literature), then the conclusion is known (see \cite[6.4]{vv-symmetry-transfer-afml} and \cite[5.7(1)]{categ-saturated-v3}) and one can strengthen (\ref{cond-5}) to (\ref{cond-5}+), i.e.\ one gets a good $\mu$-frame. It is known how to derive eventual weak tameness from categoricity in a high-enough cardinal, thus the conclusion also holds if $\mu$ is ``high-enough'' ($\mu \ge \ehanf{\LS (\K)}$ suffices) \cite[5.7(5)]{categ-saturated-v3}. However we are interested in arbitrary, potentially small, $\mu$. In this case the conclusion of Corollary \ref{easy-cor} is new.

Sixth remark: we actually prove a more local statement than Corollary \ref{easy-cor}: let us take a step back and explain how Corollary \ref{easy-cor} is proven. As is customary, we first study an independence notion called $\mu$-splitting \cite[3.2]{sh394}: For $M \lea N$ both in $\K_\mu$, $p \in \gS (N)$ \emph{$\mu$-splits} over $M$ if there exists $N_1, N_2 \in \K_\mu$ with $M \lea N_\ell \lea N$ for $\ell = 1,2$ and $f: N_1 \cong_M N_2$ such that $f (p \rest N_1) \neq p \rest N_2$. In the context of Corollary \ref{easy-cor}, Shelah and Villaveces (see Fact \ref{shvi}) have shown that $\mu$-splitting satisfies (\ref{cond-5}). $\mu$-splitting also satisfies weak analogs of uniqueness and extension (see Fact \ref{ext-uq}).

The weak uniqueness statement is the following: if $M_0 \lea M \lea N$ are all in $\K_\mu$, $M$ is universal over $M_0$, $p, q \in \gS (N)$ both do not $\mu$-split over $M_0$ and $p \rest M = q \rest M$, then $p = q$. Thus it is natural to define forking by ``shifting'' splitting by a universal model (this is already implicit in \cite{sh394} but is defined explicitly for the first time in \cite[3.8]{ss-tame-jsl}). Let us say that $p \in \gS (N)$ \emph{does not $\mu$-fork} over $M$ if there exists $M_0 \lea M$ such that $M$ is universal over $M_0$ and $p$ does not \emph{$\mu$-split} over $M_0$ (see Definition \ref{forking-def}; it can be shown that any reasonable forking-like notion must be $\mu$-forking over saturated models \cite[9.7]{indep-aec-apal}). In the setup of Corollary \ref{easy-cor}, it was known that $\mu$-forking satisfies all the conditions there except (\ref{cond-3}) (for symmetry, this is a recent result of the author \cite[5.7(1)]{categ-saturated-v3}, relying on joint work with VanDieren \cite{vv-symmetry-transfer-afml}). 

Let us describe the problem in proving uniqueness: let $M \lea N$ both be saturated in $\K_\mu$ and $p, q \in \gS (N)$ be not $\mu$-forking over $M$ with $p \rest M = q \rest M$. Thus we have witnesses $M_p, M_q$ such that $M$ is universal over both $M_p$ and $M_q$, $p$ does not $\mu$-split over $M_p$ and $q$ does not $\mu$-split over $M_q$. If we knew that $M_p$ and $M_q$ were the same (or at least had a common extension over which $M$ is still universal), then we could use the weak uniqueness described in the previous paragraph. However we do not know how the witnesses fit together, so we are stuck. This causes several technical difficulties, forcing for example the witnesses to be carried over in the study of towers in \cite{shvi635, vandierennomax, gvv-mlq, vandieren-symmetry-apal, vv-symmetry-transfer-afml}. In this paper, we prove the uniqueness property.

This implies for example that the equivalence relation $\approx$ defined in \cite[Definition 3.2.1]{shvi635} is just equality (Shelah and Villaveces ask if this is the case in the remarks after \cite[3.2.2]{shvi635}). Thus the machinery of strong types introduced there can essentially be dispensed with (see also the discussion in \cite[Section 3]{gvv-mlq}). The result also sheds light on \cite[Exercise 12.9]{baldwinbook09} (the ``transitivity'' of splitting). It is pointed out in \cite{baldwinbook09-errata-0316} that there is an error in this exercise, since the natural proof does not work. We are still unable to prove transitivity directly, but the result of this paper shows how to bypass it: work with $\mu$-nonforking (over limit models) instead of $\mu$-splitting. Then transitivity will follow directly from existence-extension and the uniqueness proven in this paper.

We now state our local uniqueness result more precisely. Let us say that an AEC $\K$ is \emph{$\mu$-superstable} if  $\K_\mu$ is nonempty, has amalgamation, joint embedding, no maximal models, is stable in $\mu$, and $\mu$-splitting satisfies (\ref{cond-5}) (see Definition \ref{ss-def}). The main result of this paper is:

\textbf{Theorem \ref{main-thm}.} 
  If $\K$ is $\mu$-superstable, then $\mu$-forking has the uniqueness property over limit models in $\K_\mu$.

Recall that $M$ is \emph{limit} if it is the union of an increasing continuous chain in $\K_\mu$ of the form $\seq{M_i : i \le \delta}$, $\delta < \mu^+$ limit and $M_{i + 1}$ universal over $M_i$ for all $i < \delta$. Limit models are a replacement for saturated models in a local context where we only know information about models of a single cardinality (see \cite{gvv-mlq} for an introduction to the theory of limit models). The proof of Theorem \ref{main-thm} proceeds by contradiction: if uniqueness fails, then we can build a tree of failures and this contradicts stability.

With Theorem \ref{main-thm} stated, we can now give a full proof of Corollary \ref{easy-cor}:

\begin{proof}[Proof of Corollary \ref{easy-cor}]
  By Fact \ref{shvi}, $\K$ is $\mu$-superstable. By \cite[5.7]{categ-saturated-v3}, saturated models in $\K_\mu$ are the same as limit models. Therefore Theorem \ref{main-thm} applies. We have that $\mu$-forking (from Definition \ref{forking-def}) satisfies (\ref{cond-5}). By Fact \ref{ext-uq}, it also satisfies (\ref{cond-2}) and it is clear that it satisfies (\ref{cond-1}). By Theorem \ref{main-thm}, it satisfies (\ref{cond-2}). Finally, by \cite[5.7(1)]{categ-saturated-v3} it satisfies (\ref{cond-4}).
\end{proof}

As an application of Theorem \ref{main-thm}, we can show that the symmetry property for splitting introduced by VanDieren in \cite{vandieren-symmetry-apal} (which in essence is a symmetry property for $\mu$-forking with certain uniformity requirements on the witnesses) is the same as the symmetry property given in the statement of Corollary \ref{easy-cor}: see Corollary \ref{sym-equiv-cor}. Thus the ``hierarchy of symmetry properties'' described in \cite[\S4]{vv-symmetry-transfer-afml} collapses: all the properties there are equivalent. This answers (the first part of) \cite[Question 4.13]{vv-symmetry-transfer-afml} and gives further evidence that symmetry is a natural property to study in this local context. We do not know whether symmetry follows from $\mu$-superstability. We also do not know whether in Theorem \ref{main-thm} we can assume only stability in $\mu$ (and amalgamation, etc.) rather than superstability.

Another open problem would be to study the properties of the weak kind of good frames derived in Corollary \ref{easy-cor}. They are called $H$-almost good frames by Shelah (see \cite[VII.5.9]{shelahaecbook2} and \cite{ShF841}). There has been some work on \emph{almost} (not $H$-almost) good frames (see \cite[VII.5]{shelahaecbook2}, \cite{jash940-v1}), where in addition to (\ref{cond-5}) a continuity property is required for all chains (i.e.\ given an increasing union of types where all the elements do not fork over a common model, the union of the chain does not fork over this model). In particular, conditions are given under which almost good frames are good frames. It would be interesting to know whether similar statements hold for $H$-almost good frames.

This paper was written while the author was working on a Ph.D.\ thesis under the direction of Rami Grossberg at Carnegie Mellon University and he would like to thank Professor Grossberg for his guidance and assistance in his research in general and in this work specifically. The author also thanks the referee for comments that helped improve the presentation of this paper.

\section{The main theorem}

For the rest of this paper, we assume that the reader has some basic familiarity with AECs (\cite[Chapters 4-12]{baldwinbook09} should be more than enough). We work inside a fixed AEC $\K$.

The following definition is implicit already in \cite{sh394} and is studied in several papers including \cite{shvi635, vandierennomax, gvv-mlq, vandieren-symmetry-apal, vv-symmetry-transfer-afml}. It is given the name superstability for the first time in \cite[7.12]{grossberg2002}.

\begin{defin}\label{ss-def}
  $\K$ is \emph{$\mu$-superstable} if:

  \begin{enumerate}
  \item $\mu \ge \LS (\K)$ and $\K_\mu \neq \emptyset$.
  \item $\K_\mu$ has amalgamation, joint embedding, and no maximal models.
  \item $\K$ is stable in $\mu$.
  \item $\K$ has no long $\mu$-splitting chains: for any limit ordinal $\delta < \mu^+$, any increasing continuous chain $\seq{M_i : i \le \delta}$ with $M_{i +1}$ universal over $M_i \in \K_\mu$ for all $i < \delta$, and any $p \in \gS (M_\delta)$, there exists $i < \delta$ such that $p$ does not $\mu$-split over $M_i$.
  \end{enumerate}
\end{defin}

A justification for this rather technical definition is the fact that it follows from categoricity. This is proven (with slightly different hypotheses) in \cite[2.2.1]{shvi635}. For an exposition and complete proof, see \cite{shvi-notes-apal}.

\begin{fact}\label{shvi}
  Assume that $\K$ has amalgamation and no maximal models. Let $\LS (\K) \le \mu < \lambda$. If $\K$ is categorical in $\lambda$, then $\K$ is $\mu$-superstable.
\end{fact}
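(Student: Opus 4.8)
The plan is to verify the four clauses of Definition \ref{ss-def} in turn, treating the first three as bookkeeping and concentrating the real work on the no-long-$\mu$-splitting-chains clause. For clause (1), categoricity in $\lambda$ provides a model of size $\lambda \ge \mu \ge \LS (\K)$, and applying the downward Löwenheim-Skolem-Tarski axiom to it yields a member of $\K_\mu$, so $\K_\mu \neq \emptyset$. For clause (2), amalgamation and no maximal models are inherited from the global hypotheses by restriction to $\K_\mu$. For joint embedding in $\K_\mu$ I would use categoricity directly: given $M, N \in \K_\mu$, repeatedly invoking ``no maximal models'' and taking unions (controlling the size via Löwenheim-Skolem) embeds each of $M$ and $N$ into some model of size $\lambda$; by categoricity these two $\lambda$-sized models are isomorphic, so $M$ and $N$ embed into a common model, which is joint embedding.

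Clause (3), stability in $\mu$, is the classical argument of Morley referenced in the introduction (see \cite[I.1.7(a)]{sh394}): were $\K$ not stable in $\mu$, there would be some $M \in \K_\mu$ with more than $\mu$ Galois types over it, and feeding the resulting order property into an Ehrenfeucht--Mostowski construction over linear orders of size $\lambda$ produces a model of size $\lambda$ that fails to be $\mu^+$-saturated. On the other hand, stability below $\lambda$ together with no maximal models lets me build a $\mu^+$-saturated model of size $\lambda$; by categoricity these two models of size $\lambda$ would have to be isomorphic, a contradiction. The same construction yields the stronger statement I will need below, namely that the unique model of size $\lambda$ is saturated.

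The heart of the proof is clause (4), and this is where I expect the main obstacle to lie. I would argue by contradiction: suppose $\seq{M_i : i \le \delta}$ is an increasing continuous chain in $\K_\mu$ with $M_{i+1}$ universal over $M_i$, $\delta < \mu^+$ limit, and $p \in \gS (M_\delta)$ $\mu$-splits over every $M_i$. The strategy is to convert this persistent splitting into a failure of saturation at the top. Each instance of $\mu$-splitting over $M_i$ supplies witnesses $N_1, N_2 \lea M_{i+1}$ and an isomorphism $f \colon N_1 \cong_{M_i} N_2$ sending $p \rest N_1$ to a type distinct from $p \rest N_2$; using that $M_{i+1}$ is universal over $M_i$, one pulls these witnesses back and branches the construction, amalgamating as one goes. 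Iterating the splitting along the chain builds a tree of models whose distinct branches realize distinct Galois types over a model of size $\mu$: when $2^{|\delta|} > \mu$ this already overflows the stability bound from clause (3), and in the remaining cases one reads off instead a model of size $\lambda$ omitting a type that the saturated categoricity model is forced to realize. Either way a contradiction is reached, giving clause (4).

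The delicate points, and the reason genuine categoricity rather than mere stability is needed, are precisely that short chains do not by themselves overflow the stability bound, so one must genuinely exploit the saturation of the $\lambda$-model established in clause (3), and that producing a saturated model of size exactly $\lambda$ requires care with cardinal arithmetic when $\lambda$ is singular. These are exactly the technical issues handled in \cite{shvi635}, whose template I would follow, and the self-contained exposition in \cite{shvi-notes-apal}.
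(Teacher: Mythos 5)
The paper does not actually prove this statement: it is imported as a \emph{Fact}, with the proof attributed to Shelah--Villaveces \cite[2.2.1]{shvi635} and the self-contained exposition \cite{shvi-notes-apal}. Your outline matches the structure of that literature --- clauses (1)--(3) of Definition \ref{ss-def} as routine bookkeeping plus Morley's argument, with the real content in the no-long-splitting-chains clause --- and you defer the hard technical work to exactly the references the paper itself relies on. At the level at which the paper engages with this statement, that is an acceptable decomposition. Two of your mechanisms, however, would not survive being written out.

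First, for stability in $\mu$ you propose to pass from instability to ``the resulting order property'' and then through an EM construction to a non-$\mu^+$-saturated model of cardinality $\lambda$. In AECs the implication ``unstable $\Rightarrow$ order property'' is not available without extra hypotheses such as tameness, and the companion step --- building a $\mu^+$-saturated model of cardinality $\lambda$ --- is itself problematic when $\cf{\lambda} \le \mu$, since taking unions of chains of saturated models is exactly the kind of statement that in this paper's context is a \emph{consequence} of superstability; your route risks circularity. Morley's argument as in \cite[I.1.7(a)]{sh394} avoids both issues: one checks directly that $\EM(\lambda)$, built over the well-ordering $\lambda$, realizes at most $\mu$ Galois types over any submodel of cardinality $\mu$ (the type of an EM term is controlled by the quantifier-free order type of its generating indices, and a $\mu$-sized subset of a well-order determines only $\mu$ relevant cuts), while instability in $\mu$ would produce a model of cardinality $\lambda$ realizing $\mu^+$ types over some $\mu$-sized submodel; categoricity then gives the contradiction with no saturation and no order property needed. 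Second, for clause (4) you correctly observe that the binary tree of splitting witnesses only defeats stability when $2^{|\delta|} > \mu$, but your treatment of the remaining case (``one reads off instead a model of size $\lambda$ omitting a type that the saturated categoricity model is forced to realize'') is a placeholder rather than an argument; that case is precisely where almost all of the content of \cite{shvi635} lives, and full saturation of the categoricity model is not available at this stage of the development. Since the paper itself black-boxes the result, deferring there is legitimate --- but you should be explicit that you are citing that step rather than sketching it.
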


\emph{From now on, we assume that $\K$ is $\mu$-superstable} (we will repeat this hypothesis at the beginning of important statements). We fix a ``monster model'' $\sea \in \K_{\mu^+}$ that is universal and model-homogeneous and work inside it.

\begin{remark}
  We could work in the more general setup of \cite{shvi635} (with only density of amalgamation bases, existence of universal extensions, limit models being amalgamation bases, and no long splitting chains), but we prefer to avoid technicalities.
\end{remark}

The following is the main object of study of this paper:

\begin{defin}[3.8 in \cite{ss-tame-jsl}]\label{forking-def}
  For $M \lea N$ both in $\K_\mu$, $p \in \gS (N)$ \emph{does not $\mu$-fork over $(M_0, M)$} if $M$ is universal over $M_0$ and $p$ does not $\mu$-split over $M_0$. We say that $p$ \emph{does not $\mu$-fork over $M$} if it does not $\mu$-fork over $(M_0, M)$ for some $M_0$.
\end{defin}

Since $\mu$ is always clear from context, we will omit it: we will say ``$p$ does not fork'' and ``$p$ does not split'' instead of ``$p$ does not $\mu$-fork'' and ``$p$ does not $\mu$-split''.

It is clear that forking has the basic invariance and monotonicity properties (see \cite[3.9]{ss-tame-jsl}). The following are implicit in \cite{sh394} and stated explicitly in \cite[I.4.10, I.4.12]{vandierennomax}. We will use them without much comments.

\begin{fact}\label{ext-uq}
  Let $M_0 \lea M \lea N \lea N'$ all be in $\K_\mu$.
  
  \begin{enumerate}
  \item Extension: If $p \in \gS (N)$ does not fork over $(M_0, M)$, then there exists an extension $q \in \gS (N')$ of $p$ that does not fork over $(M_0, M)$.
  \item Weak uniqueness: If $p, q \in \gS (N)$ do not fork over $(M_0, M)$ and $p \rest M = q \rest M$, then $p = q$.
  \end{enumerate}
\end{fact}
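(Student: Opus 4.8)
The plan is to argue by contradiction, building a \emph{tree of failures} whose branches yield more than $\mu$ distinct Galois types over a single model of size $\mu$, contradicting stability in $\mu$. So suppose the uniqueness property fails: there are limit models $M \lea N$ in $\K_\mu$ and types $p_0, p_1 \in \gS (N)$, both not forking over $M$, with $p_0 \rest M = p_1 \rest M$ but $p_0 \neq p_1$. Fix witnesses: models $M_0, M_1 \lea M$ (possibly distinct, since the whole difficulty described in the introduction is that the witnesses need not agree) with $M$ universal over each $M_\ell$ and $p_\ell$ not splitting over $M_\ell$. I will treat this one configuration as a reusable ``branching gadget''.

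First I would show that this branching gadget \emph{recurs over every limit model}: for every limit $R \in \K_\mu$ there are a limit $Q \gea R$ and types $r_0 \neq r_1 \in \gS (Q)$ that do not fork over $R$ and satisfy $r_0 \rest R = r_1 \rest R$. The idea is to transport the fixed failure by invariance under $\Aut (\sea)$. Using that a limit model is universal in $\K_\mu$ (every $\K_\mu$-model $\lea$-embeds into it, by amalgamating with its base), I realize a copy of the pair $(M, N)$ in position relative to $R$, move $p_0, p_1$ along the corresponding automorphism, and absorb the result into an extension $Q$ by amalgamation. The crucial point is that universality of a model over a base is inherited by all larger models on top, so the transported types still do not fork over the intended smaller base; only universality and amalgamation are used here, not a uniqueness theorem for limit models. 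Monotonicity (\ref{ext-uq}) and the extension property are then available to push types upward while preserving non-forking.

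Next I would fix a cardinal $\chi$ with $2^{<\chi} \le \mu < 2^\chi$ (the least $\chi$ with $2^\chi > \mu$ works and satisfies $\chi \le \mu < \mu^+$, and $\chi$ is an infinite cardinal, hence a limit ordinal) and construct a tree $\seq{N_\eta : \eta \in {}^{<\chi}2}$ of limit models in $\K_\mu$, increasing and continuous along each branch. At each node $\eta$ I invoke the recurrence above to branch: I produce two non-forking extensions carrying types that disagree already inside a common extension, place the two children $N_{\eta^\frown 0}, N_{\eta^\frown 1}$ realizing these two continuations, and use Fact \ref{ext-uq}(1) to carry the chosen type coherently up the branch (taking unions at limit levels, where continuity and the superstability hypothesis keep the relevant restrictions located over a fixed small base). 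Setting $N^* = \bigcup_{\eta} N_\eta$ gives $|N^*| = 2^{<\chi} \cdot \mu = \mu$, so $N^* \in \K_\mu$. For distinct branches $b \neq b'$ splitting at level $\alpha$ with common node $\eta$, the two continuations already differ over a model $\lea N^*$ attached to $\eta$; realizing each branch by a single coherent element $a_b$, the map $b \mapsto \gtp (a_b / N^*)$ is therefore injective, producing $2^\chi > \mu$ distinct members of $\gS (N^*)$ and contradicting stability in $\mu$.

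The main obstacle I expect is precisely the coherent bookkeeping in the tree construction: arranging that the disagreement created at a node \emph{persists} to a genuine difference of the branch-types over the single model $N^*$ (so the branch-to-type map is injective), while simultaneously keeping each $N_\eta$ a limit model of size $\mu$ so the branching gadget remains available at the next level. The delicate points are making the two realized continuations disagree inside a \emph{common} extension at every node rather than over incomparable models, lining up the common restriction of the recurring gadget with the type already carried up the branch (so that a single element $a_b$ realizes the whole coherent limit type), and checking that all these local agreements and disagreements survive restriction to $N^*$ uniformly across a tree of height $\chi$. By comparison, the choice of $\chi$ and the cardinal arithmetic $|{}^{<\chi}2| \cdot \mu = \mu$ are routine.
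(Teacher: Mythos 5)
There is a genuine gap here, and it is structural: you have proved the wrong statement. Fact \ref{ext-uq} is the \emph{weak} uniqueness and extension property, in which both types come with a \emph{common} witness pair $(M_0, M)$ --- that is, $M$ is universal over $M_0$ and both $p$ and $q$ do not $\mu$-split over the same $M_0$. This is the elementary, previously known fact (implicit in \cite{sh394}, explicit in \cite[I.4.10, I.4.12]{vandierennomax}), and its proof is short and direct, with no contradiction and no tree. For weak uniqueness: by universality of $M$ over $M_0$, pick $f \colon N \xrightarrow[M_0]{} M$; since $p$ does not split over $M_0$ and $f[N] \lea M \lea N$, nonsplitting applied to $f \colon N \cong_{M_0} f[N]$ gives $f(p) = p \rest f[N]$ and likewise $f(q) = q \rest f[N]$; as $p \rest M = q \rest M$ these agree, so $p = q$. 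Extension is similar: embed $N'$ into $M$ over $M_0$ and pull back $p \rest f[N']$, using nonsplitting to check the pullback extends $p$. What you have instead sketched is Theorem \ref{main-thm} --- uniqueness when the two types have possibly \emph{different} witnesses --- which is precisely the paper's main new result, distinguished from Fact \ref{ext-uq} at length in the introduction. Your opening move (``fix witnesses $M_0, M_1$, possibly distinct'') is the signature of the hard theorem, not of this Fact.

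This mismatch makes your argument circular as a proof of Fact \ref{ext-uq}: inside the tree construction you explicitly invoke ``Fact \ref{ext-uq}(1) to carry the chosen type coherently up the branch,'' and the limit stages (the paper's Fact \ref{limit-fact}) also rest on weak uniqueness --- so you assume both parts of the statement you were asked to prove. Even judged as a sketch of Theorem \ref{main-thm}, it leaves the decisive steps unresolved: transporting the failure configuration to an arbitrary limit model $R$ is not achievable by ``only universality and amalgamation'' --- the paper needs the conjugation property (Fact \ref{conj-prop}), which relies on uniqueness of limit models of the same length --- and the ``coherent bookkeeping'' you flag as the main obstacle is exactly what the paper's machinery of bad types (Definition \ref{bad-def} and Lemmas \ref{bad-limit}, \ref{bad-succ-0}, \ref{bad-succ}) is built to handle: badness is closed under nonforking extensions and under unions of universal chains, which is what makes the disagreement at each node persist to distinct branch types over $N^*$. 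Deferring that point defers the entire proof.
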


We now state a weak version of the conjugation property that types enjoy in good frames \cite[III.1.21]{shelahaecbook}. This will be key in the proof of the main theorem.

\begin{defin}
  Let $M, M' \in \K_\mu$, $p \in \gS (M)$, $p' \in \gS (M')$. Let $A \subseteq |M| \cap |M'|$. We say that $p$ and $p'$ are \emph{conjugate over $A$} if there exists $f: M \cong_A M'$ such that $p' = f (p)$.
\end{defin}

\begin{fact}[Conjugation property]\label{conj-prop}
  Let $\delta < \mu^+$ be a limit ordinal. Let $M_0, M, N \in \K_\mu$, with $M_0 \lea M \lea N$. Assume that $M$ is $(\mu, \delta)$-limit over $M_0$ and $N$ is $(\mu, \delta)$-limit over $M$. If $p \in \gS (N)$ does not fork over $(M_0, M)$, then $p$ and $p \rest M$ are conjugate over $M_0$.
\end{fact}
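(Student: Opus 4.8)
The plan is to produce an isomorphism $f : N \cong_{M_0} M$ witnessing that $f(p) = p \rest M$, which is exactly what conjugation over $M_0$ requires.

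First I would construct the isomorphism itself, ignoring the types. Concatenating a chain witnessing that $M$ is $(\mu, \delta)$-limit over $M_0$ with one witnessing that $N$ is $(\mu, \delta)$-limit over $M$ shows that $N$ is $(\mu, \delta + \delta)$-limit over $M_0$. Since $\cf{\delta + \delta} = \cf{\delta}$, the uniqueness of limit models of a fixed cofinality over a common base (a standard fact) yields $f : N \cong_{M_0} M$. Note also that $f[M] \lea f[N] = M$, and that $f[M] \cong_{M_0} M$ is itself $(\mu, \delta)$-limit, hence universal, over $M_0$.

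The heart of the argument is to show that this $f$ already moves $p$ correctly. Restricting $f$ gives an isomorphism $f \rest M : M \cong_{M_0} f[M]$ fixing $M_0$, with both $M$ and $f[M]$ strong substructures of $N$. Since $p \in \gS (N)$ does not split over $M_0$, applying the definition of splitting to this isomorphism forces $(f \rest M)(p \rest M) = p \rest f[M]$; but the left-hand side is exactly $f(p) \rest f[M]$, so $f(p)$ and $p \rest M$ agree on $f[M]$. It then remains to upgrade this agreement on $f[M]$ to agreement on all of $M$, and here I would invoke weak uniqueness (Fact \ref{ext-uq}(2)). Both $f(p)$ and $p \rest M$ lie in $\gS (M)$ and both do not fork over $(M_0, f[M])$: indeed $f[M]$ is universal over $M_0$, $p \rest M$ does not split over $M_0$ by monotonicity, and $f(p)$ does not split over $f[M_0] = M_0$ since splitting is invariant under the isomorphism $f$. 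As the two types agree on $f[M]$, weak uniqueness gives $f(p) = p \rest M$, completing the proof.

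I expect the main obstacle to be the middle step: recognizing that non-splitting of $p$ over $M_0$ is precisely what controls the a priori unconstrained isomorphism $f$ on $f[M]$. The existence of $f$ is routine given uniqueness of limit models, and the final bootstrapping is a direct application of weak uniqueness; the only subtlety worth double-checking is that all the relevant models sit inside $N$, so that the definition of splitting and Fact \ref{ext-uq} genuinely apply.
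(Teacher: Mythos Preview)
Your argument is correct. The overall strategy matches the paper's: produce an isomorphism $f:N\cong M$ over a suitable base using uniqueness of limit models, then apply weak uniqueness (Fact~\ref{ext-uq}(2)) to conclude $f(p)=p\rest M$. The execution differs in one tactical point worth noting.

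You take $f$ to fix only $M_0$, obtained via the observation that $N$ is $(\mu,\delta+\delta)$-limit over $M_0$ with $\cf{\delta+\delta}=\cf{\delta}$. You then have to work to see that $f(p)$ and $p\rest M$ agree on some universal extension of $M_0$: you invoke the \emph{definition} of non-splitting, applied to $f\rest M:M\cong_{M_0} f[M]$ inside $N$, to get agreement on $f[M]$. The paper instead first inserts an intermediate $M_1$ with $M_0\lea M_1\lea M$, $M_1$ universal over $M_0$, and $M$ $(\mu,\delta)$-limit over $M_1$; then both $M$ and $N$ are $(\mu,\delta)$-limit over $M_1$, so one can choose $f:N\cong_{M_1} M$. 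Since $f$ now \emph{fixes} $M_1$, the agreement $f(p)\rest M_1=p\rest M_1$ is immediate, and the definition of splitting is never unpacked. Both routes finish identically with weak uniqueness over $(M_0,\,\cdot\,)$. Your route is perhaps more direct in using the hypothesis; the paper's is slightly slicker in that it avoids checking $f[M]\lea N$ and never touches the definition of splitting explicitly.
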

\begin{proof}
  Since $M$ is limit over $M_0$, there exists $M_1 \in \K_\mu$ such that $M_0 \lea M_1 \lea M$, $M_1$ is universal over $M_0$, and $M$ is $(\mu, \delta)$-limit over $M_1$. Note that then also $N$ is $(\mu, \delta)$-limit over $M_1$. Using uniqueness of limit models of the same length, pick $f: N \cong_{M_1} M$. Let $q := f (p)$. We claim that $q = p \rest M$. Note that by invariance $q$ does not fork over $(M_0, f[M])$, hence (by monotonicity) over $(M_0, M_1)$. By assumption and monotonicity, also $p \rest M$ does not fork over $(M_0, M_1)$. Since $f$ fixes $M_1$, $p \rest M_1 = q \rest M_1$, so using weak uniqueness $q = p \rest M$, as desired.
\end{proof}
\begin{remark}
  We do \emph{not} know here that limit models of different lengths are isomorphic.
\end{remark}

The next fact says that certain increasing chains of types have least upper bounds. The proof combines the ``no long $\mu$-splitting chains'' clause in the definition of $\mu$-superstability together with the extension and weak uniqueness properties described earlier

\begin{fact}\label{limit-fact}
  Assume that $\K$ is $\mu$-superstable. Let $\delta < \mu^+$ be a limit ordinal and let $\seq{M_i : i \le \delta}$ be increasing continuous in $\K_\mu$ with $M_{i + 1}$ universal over $M_i$ for all $i < \delta$. Suppose we are given an increasing chain of types $\seq{p_i : i < \delta}$ such that $p_i \in \gS (M_i)$ for all $i < \delta$. Then there exists a unique $p_\delta \in \gS (M_\delta)$ such that $p_\delta \rest M_i = p_i$ for all $i < \delta$.
\end{fact}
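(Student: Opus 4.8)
The plan is to prove uniqueness directly and to reduce existence to a ``stabilization'' statement pinning down where the $p_i$ stop splitting; the delicate point is the construction of the limit type, not uniqueness.

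\emph{Uniqueness.} Suppose $p_\delta, p_\delta' \in \gS (M_\delta)$ both restrict to $p_i$ on each $M_i$. Applying the no-long-$\mu$-splitting-chains clause (Definition \ref{ss-def}) to the chain and to each of $p_\delta, p_\delta'$ gives $k_1, k_2 < \delta$ with $p_\delta$ not splitting over $M_{k_1}$ and $p_\delta'$ not splitting over $M_{k_2}$. Put $k := \max (k_1, k_2)$; since non-splitting is preserved when the base grows, both types fail to split over $M_k$. As $\delta$ is a limit ordinal, $k + 1 < \delta$ and $M_{k+1}$ is universal over $M_k$, so both $p_\delta$ and $p_\delta'$ do not fork over $(M_k, M_{k+1})$ and both restrict to $p_{k+1}$ on $M_{k+1}$; weak uniqueness (Fact \ref{ext-uq}) yields $p_\delta = p_\delta'$.

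\emph{Existence.} The crux is the following stabilization claim: there is $k < \delta$ such that $p_i$ does not split over $M_k$ for all $i \in [k, \delta)$. Granting it, existence is short: $p_{k+1}$ does not fork over $(M_k, M_{k+1})$, so let $p_\delta \in \gS (M_\delta)$ be a non-forking extension of $p_{k+1}$ (Fact \ref{ext-uq}). For $i \in [k+1, \delta)$ both $p_\delta \rest M_i$ and $p_i$ do not fork over $(M_k, M_{k+1})$ (using the claim for $p_i$) and agree with $p_{k+1}$ on $M_{k+1}$, so weak uniqueness gives $p_\delta \rest M_i = p_i$; for $i \le k+1$ this follows by a further restriction. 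To prove the claim I would argue by contradiction, using two monotonicity properties of splitting: ``$p_i$ splits over $M_k$'' is preserved when $i$ grows (the witnesses lie in $M_i \lea M_{i'}$ and the types agree on them) and when $k$ shrinks. If stabilization fails, then for each $k < \delta$ the set of $i$ with $p_i$ splitting over $M_k$ is a nonempty end-segment of $[0, \delta)$, and I would build an increasing $\seq{k_n : n < \omega}$ with $p_{k_{n+1}}$ splitting over $M_{k_n}$ and with the splitting witnessed \emph{inside} $M_{k_{n+1}}$. If $\beta := \sup_n k_n < \delta$, then since the types increase, $p_\beta \rest M_{k_{n+1}} = p_{k_{n+1}}$ transfers each splitting down, so $p_\beta$ splits over every $M_{k_n}$; applying no-long-splitting-chains to $\seq{M_{k_n} : n \le \omega}$ and $p_\beta$ gives a contradiction. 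Choosing the $k_n$ inside a closure point of the ``first witness'' map guarantees $\beta < \delta$ whenever $\cf{\delta} > \omega$.

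The main obstacle is the case $\cf{\delta} = \omega$ (already when $\delta = \omega$): then any such $\seq{k_n}$ has $\sup_n k_n = \delta$, there is no limit ordinal strictly below $\delta$ on which to land the contradiction, and the $p_{k_n}$ do not a priori assemble into a type over $M_\delta$ — which is exactly what we are trying to construct. This is the genuine failure of compactness for Galois types, and it is where no-long-splitting-chains alone does not obviously suffice. To handle it I would instead argue directly at stage $\delta$: assuming no common extension exists, the extensions $p_{k_{n+1}}$ must branch over the models $M_{k_n}$, and I would use the witnessed splittings to build a tree of more than $\mu$ pairwise distinct types over a single model of size $\mu$, contradicting stability in $\mu$ — the same mechanism used in the proof of Theorem \ref{main-thm}. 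This stability/tree argument for countable cofinality is where the real work lies.
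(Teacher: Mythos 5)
Your uniqueness argument is correct and matches the paper's, and your treatment of existence for $\cf{\delta} > \omega$ is also sound: the stabilization claim (some $M_k$ over which all later $p_i$ do not split) is exactly the pivot the paper uses, and your diagonal construction of $\seq{k_n : n < \omega}$ with $\sup_n k_n = \beta < \delta$, followed by applying no-long-splitting-chains to $\seq{M_{k_n} : n \le \omega}$ and $p_\beta$, is a perfectly good substitute for the paper's Fodor's-lemma argument (after the same reduction to regular $\delta$ that the paper makes without loss of generality).

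The genuine gap is the case $\cf{\delta} = \omega$, which you explicitly leave open, proposing only a speculative ``tree of types contradicting stability'' argument. You have in fact inverted where the difficulty lies: $\delta = \omega$ is the \emph{easy} case, and the paper dispatches it in one line by a standard coherent direct limit argument (\cite[11.1]{baldwinbook09}). Concretely, one realizes each $p_n$ by some $a_n$ in a model $N_n \gea M_n$ and, using amalgamation and the hypothesis $p_{n+1} \rest M_n = p_n$, builds a coherent directed system $\seq{N_n, f_{m,n}}$ identifying the realizations over the $M_n$; since $\omega$ has no limit points below it there is no continuity obstruction, and the direct limit produces $p_\delta \in \gS(M_\delta)$ extending every $p_n$. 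This uses only amalgamation — no splitting, no superstability, no stability count. The ``failure of compactness for Galois types'' you worry about is real, but it manifests at \emph{uncountable} cofinalities (where a transfinite direct system needs continuity at its own limit stages), which is precisely the case you already handled via no-long-splitting-chains. So the fix is not a new stability argument but the standard $\omega$-direct-limit lemma; without it, your existence proof is incomplete.
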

\begin{proof}
  Without loss of generality, $\delta$ is regular. If $\delta = \omega$, the conclusion is given by a straightforward direct limit argument \cite[11.1]{baldwinbook09}, so assume that $\delta > \omega$. Using no long splitting chains, for each limit $i < \delta$ there exists $j_i < i$ such that $p_i$ does not split over $M_{j_i}$. By Fodor's lemma, there exists a stationary $S \subseteq \delta$ and a $j < \delta$ such that $p_i$ does not split over $M_j$ for all $i \in S$. Since $S$ is unbounded and the $p_i$'s are increasing, $p_i$ does not split over $M_j$ for \emph{all} $i \in [j, \delta)$. Let $q \in \gS (M_\delta)$ be an extension of $p_{j + 1}$ that does not split over $M_j$. By weak uniqueness, $q \rest M_i = p_i$ for all $i \in [j + 1, \delta)$. This proves existence and uniqueness is similar: any $q' \in \gS (M_\delta)$ extending all the $p_i$'s must be nonsplitting over $M_j$, so use weak uniqueness.
\end{proof}

Recall that our goal is to prove uniqueness of nonforking extension. To this end, we define a type to be \emph{bad} if it witnesses a failure of uniqueness. We then close this definition under nonforking extensions.

\begin{defin}\label{bad-def}
  Let $M \in \K_\mu$ be limit. We define by induction on $n < \omega$ what it means for a type $p \in \gS (M)$ to be \emph{$n$-bad}:

  \begin{enumerate}
  \item $p$ is \emph{$0$-bad} if there exists a limit model $N \in \K_\mu$ with $M \lea N$ and $q_1, q_2 \in \gS (N)$ such that:

    \begin{enumerate}
    \item Both $q_1$ and $q_2$ extend $p$.
    \item $q_1 \neq q_2$.
    \item Both $q_1$ and $q_2$ do not fork over $M$.
    \end{enumerate}
  \item For $n < \omega$, $p$ is \emph{$(n + 1)$-bad} if there exists a limit model $M_0 \in \K_\mu$ with $M_0 \lea M$ such that $p \rest M_0$ is $n$-bad and $p$ does not fork over $M_0$.
  \item $p$ is \emph{bad} if $p$ is $n$-bad for some $n < \omega$.
  \end{enumerate}
\end{defin}

The following is an easy consequence of the definition (in fact the definition is tailored exactly to make this work):

\begin{remark}\label{bad-trans}
  Let $M \lea N$ both be limit in $\K_\mu$. If $p \in \gS (N)$ does not fork over $M$ and $p \rest M$ is bad, then $p$ is bad.
\end{remark}

We now proceed to develop some the theory of bad types. In the end, we will conclude that this contradicts stability in $\mu$, hence there cannot be any bad types. The next two lemmas are crucial: bad types are closed under unions of universal chains, and any bad type has two distinct bad extensions.

\begin{lem}\label{bad-limit}
  Assume that $\K$ is $\mu$-superstable. Let $\delta < \mu^+$ be a limit ordinal. Let $\seq{M_i : i \le \delta}$ be an increasing continuous chain of limit models in $\K_\mu$ with $M_{i + 1}$ limit over $M_i$ for all $i < \delta$. Let $\seq{p_i : i \le \delta}$ be an increasing chain of types, with $p_i \in \gS (M_i)$ for all $i < \delta$. If $p_i$ is bad for all $i < \delta$, then $p_\delta$ is bad.
\end{lem}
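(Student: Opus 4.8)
The plan is to reduce the statement to Remark \ref{bad-trans} by locating, inside the chain, a \emph{limit} model over which $p_\delta$ does not fork and whose associated restriction of $p_\delta$ is already known to be bad. The whole argument is a short unwinding of the definitions once the right model is identified.

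First I would exploit the hypothesis that $M_{i + 1}$ is limit over $M_i$: this gives in particular that $M_{i + 1}$ is \emph{universal} over $M_i$, so the chain $\seq{M_i : i \le \delta}$ satisfies the hypotheses of the ``no long $\mu$-splitting chains'' clause of $\mu$-superstability (Definition \ref{ss-def}(4)). Applying that clause to $p_\delta \in \gS (M_\delta)$, I obtain some $i < \delta$ such that $p_\delta$ does not split over $M_i$. Since $\delta$ is a limit ordinal, $i + 1 < \delta$; and because $M_{i + 1}$ is universal over $M_i$ while $p_\delta$ does not split over $M_i$, Definition \ref{forking-def} shows that $p_\delta$ does not fork over $(M_i, M_{i + 1})$, hence does not fork over the limit model $M_{i + 1}$.

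Finally, since the $p_i$ form an increasing chain we have $p_\delta \rest M_{i + 1} = p_{i + 1}$, which is bad by hypothesis (as $i + 1 < \delta$). Both $M_{i + 1}$ and $M_\delta$ are limit models, $p_\delta$ does not fork over $M_{i + 1}$, and $p_\delta \rest M_{i + 1}$ is bad, so Remark \ref{bad-trans} applies directly and yields that $p_\delta$ is bad, as desired.

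The only point requiring care — and the one place a naive argument could slip — is the passage from ``does not split over $M_i$'' to ``does not fork over a limit model''. This is exactly why the \emph{successor} $M_{i + 1}$, rather than $M_i$ itself, must be used: forking requires a universal pair, and it is the ``limit over'' hypothesis on the chain that supplies $M_{i + 1}$ universal over $M_i$. I expect no genuine obstacle beyond bookkeeping the indices and checking that each model invoked ($M_{i+1}$ and $M_\delta$) is indeed a limit model so that Remark \ref{bad-trans} is legitimately applicable.
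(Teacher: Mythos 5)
Your proof is correct and takes essentially the same route as the paper's: apply the no-long-splitting-chains clause to $p_\delta$ along the chain and then conclude via Remark \ref{bad-trans}. You are in fact slightly more explicit than the paper's one-line argument in handling the passage from ``does not split over $M_i$'' to ``does not fork over the limit model $M_{i+1}$'', which is exactly the right point to be careful about.
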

\begin{proof}
  Since there are no long splitting chains, there exists $i < \delta$ such that $p_\delta$ does not fork over $M_i$. By assumption, $p \rest M_i$ is bad, so by Remark \ref{bad-trans} $p_\delta$ is also bad, as desired.
\end{proof}

\begin{lem}\label{bad-succ-0}
  Assume that $\K$ is $\mu$-superstable. Let $M \in \K_\mu$ be a limit model. If $p \in \gS (M)$ is bad, then there exists a limit model $N$ in $\K_\mu$ with $M \lea N$ and $q_1, q_2 \in \gS (N)$ such that:

  \begin{enumerate}
  \item Both $q_1$ and $q_2$ extend $p$.
  \item $q_1 \neq q_2$.
  \item Both $q_1$ and $q_2$ are bad.
  \end{enumerate}
\end{lem}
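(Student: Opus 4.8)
The plan is to induct on $n$ and show that every $n$-bad type has two distinct bad extensions; since $p$ is bad it is $n$-bad for some $n$, so this suffices. For the base case $n = 0$, the definition of $0$-badness already hands us almost exactly what we want: there are a limit $N \gea M$ and distinct $q_1, q_2 \in \gS (N)$ extending $p$, both not forking over $M$. It remains only to check badness, and this is immediate from Remark \ref{bad-trans}: since $q_\ell \rest M = p$ is bad (it is $0$-bad) and $q_\ell$ does not fork over $M$, each $q_\ell$ is bad.

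For the inductive step, suppose $p$ is $(n+1)$-bad, witnessed by a limit $M_0 \lea M$ with $q := p \rest M_0$ being $n$-bad and $p$ not forking over $M_0$. Applying the induction hypothesis to $q$ produces a limit $N_0 \gea M_0$ and distinct bad types $r_1, r_2 \in \gS (N_0)$ extending $q$. I would then transfer these up to $p$. The clean way to do this is to produce an isomorphism $F : M_0 \cong M$ with $F (q) = p$: extending $F$ to an automorphism of $\sea$ and restricting it to $N_0$ gives an isomorphism $\widehat{F} : N_0 \cong N$ onto a limit model $N := \widehat{F}[N_0] \gea \widehat{F}[M_0] = M$ with $\widehat{F} \rest M_0 = F$. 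Setting $q_\ell := \widehat{F} (r_\ell) \in \gS (N)$, we get $q_\ell \rest M = \widehat{F} (r_\ell \rest M_0) = F (q) = p$, so each $q_\ell$ extends $p$; they are distinct since $r_1 \neq r_2$ and $\widehat{F}$ is an isomorphism; and they are bad since badness is invariant under isomorphism. This completes the step.

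All the difficulty is therefore concentrated in producing $F$, and here the conjugation property (Fact \ref{conj-prop}) is exactly the right tool. With the roles of its three models taken to be $(M_0', M_0, M)$, it yields an isomorphism $f : M \cong_{M_0'} M_0$ with $f (p) = p \rest M_0 = q$, so that $F := f^{-1}$ works, \emph{provided} its hypotheses hold: for a common $\delta$ one needs $M_0$ to be $(\mu, \delta)$-limit over some $M_0'$, $M$ to be $(\mu, \delta)$-limit over $M_0$, and $p$ to not fork over $(M_0', M_0)$. The requirement on $M$ over $M_0$ is harmless. Indeed I may first replace $p$ by a nonforking extension (Fact \ref{ext-uq}) to a $(\mu, \delta)$-limit model over $M_0$: this leaves $p \rest M_0 = q$ untouched, keeps $p$ nonforking over $M_0$, and any two distinct bad extensions of the larger type are in particular distinct bad extensions of $p$.

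The main obstacle I expect is the remaining requirement: I must locate a nonsplitting base $M_0'$ of $p$ over which $M_0$ is \emph{itself} $(\mu, \delta)$-limit, and I must do so \emph{without} enlarging $M_0$. Enlarging $M_0$ is tempting (a limit model over a nonsplitting base is easy to insert above it), but fatal here: any limit model properly containing $M_0$ and seeing $p$'s nonforking carries a restriction of $p$ of badness rank $n+1$ rather than $n$, which breaks the application of the induction hypothesis. The hypothesis only gives that $p$ does not split over some $M_0'$ with $M_0$ \emph{universal} over $M_0'$, and upgrading ``universal'' to ``$M_0$ is limit over a (possibly enlarged) nonsplitting base'' is the delicate point. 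I would attack it by combining the no-long-splitting-chains clause of $\mu$-superstability (to find a nonsplitting base for $p$ sitting inside a universal chain that witnesses the limit structure of $M_0$) with the upward monotonicity of nonsplitting in its base and the uniqueness of limit models of equal length.
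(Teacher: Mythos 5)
Your base case and the general shape of your inductive step (conjugation via Fact \ref{conj-prop}, then transport of the two bad extensions through the conjugating map) match the paper. But the proof is not complete: the step you yourself flag as ``the main obstacle'' --- upgrading the witness $M_0'$ from ``$M_0$ is universal over $M_0'$'' to ``$M_0$ is $(\mu,\delta)$-limit over $M_0'$'' without changing $M_0$ --- is a genuine gap, not a routine verification. The clause of $\mu$-superstability only gives nonsplitting bases inside a \emph{chosen} resolution of $M_0$, whereas your $M_0'$ comes from the definition of $(n+1)$-badness and has no a priori relation to any resolution of $M_0$; gluing the two via ``uniqueness of limit models of equal length'' runs into exactly the witness-matching problem that motivates this whole paper (cf.\ the discussion of transitivity of splitting in the introduction). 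So as written the inductive step does not go through.

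The paper's proof avoids this obstacle by refusing to conjugate $M$ onto $M_0$ at all. Instead, it uses universality of $M_0$ over $M_0'$ to \emph{insert} a model $M_1'$ with $M_0' \lea M_1' \lea M_0$ that is $(\mu,\omega)$-limit over $M_0'$ by construction, takes $M^\ast$ to be $(\mu,\omega)$-limit over $M$ (hence over $M_1'$), and extends $p$ to $q \in \gS(M^\ast)$ not forking over $(M_0', M_1')$. Now Fact \ref{conj-prop} applies with the triple $(M_0', M_1', M^\ast)$ --- both limit hypotheses hold for the same $\delta = \omega$ because both models were built to order --- giving that $q$ and $p \rest M_1'$ are conjugate over $M_0'$. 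The induction hypothesis is still applied to $p \rest M_0$ exactly as you intended, producing $N^\ast \gea M_0$ and distinct bad $r_1, r_2$ extending $p \rest M_0$; since $M_1' \lea M_0$, these are in particular distinct bad extensions of $p \rest M_1'$, and the conjugation carries them to distinct bad extensions of $q$, hence of $p$. Note also that your worry about ``enlarging $M_0$'' is beside the point: the fix is to shrink the base of the conjugation to $M_1'$, not to move $M_0$. If you replace your search for $F : M_0 \cong M$ by this $M_1'$/$M^\ast$ maneuver, your argument closes.
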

\begin{proof}
  By definition, $p$ is $n$-bad for some $n < \omega$. We proceed by induction on $n$.

  \begin{itemize}
  \item If $n = 0$, this is the definition of being $0$-bad (note that $q_1$ and $q_2$ from Definition \ref{bad-def} are bad because they are nonforking extensions of the bad type $p$, see Remark \ref{bad-trans}) 
  \item If $n = m +1$, let $M_0 \in \K_\mu$ be a limit model such that $M_0 \lea M$, $p$ does not fork over $M_0$, and $p \rest M_0$ is $m$-bad. Pick $M_0'$ such that $p$ does not fork over $(M_0', M_0)$. Let $M_1'$ be $(\mu, \omega)$-limit over $M_0'$ with $M_1' \lea M_0$. By monotonicity, $p$ does not fork over $(M_0', M_1')$. Let $M^\ast$ be $(\mu, \omega)$-limit over $M$ (hence over $M_1'$). Let $q \in \gS (M^\ast)$ be an extension of $p$ that does not fork over $(M_0', M)$, hence over $(M_0', M_1')$. By Fact \ref{conj-prop}, $q$ and $p \rest M_1'$ are conjugate over $M_0'$. Now by the induction hypothesis, there exists a limit model $N^\ast$ extending $M_0$ and two distinct bad extensions of $p \rest M_0$ to $N^\ast$. These are also extensions of $p \rest M_1'$, so the result follows from the fact that $q$ and $p \rest M_1'$ are conjugate over $M_0'$.
  \end{itemize}
\end{proof}

The following nominally stronger version of Lemma \ref{bad-succ-0} (where $N$ is fixed first) is the one that we will use to show that there are no bad types:

\begin{lem}\label{bad-succ}
  Assume that $\K$ is $\mu$-superstable. Let $M$ be a limit model in $\K_\mu$ and let $N$ be limit over $M$. If $p \in \gS (M)$ is bad, then there exists $q_1, q_2 \in \gS (N)$ such that:

    \begin{enumerate}
  \item Both $q_1$ and $q_2$ extend $p$.
  \item $q_1 \neq q_2$.
  \item Both $q_1$ and $q_2$ are bad.
  \end{enumerate}
\end{lem}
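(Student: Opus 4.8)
The plan is to deduce this from Lemma \ref{bad-succ-0} by transporting into the prescribed model $N$ the two bad types that lemma produces. The whole point of the stronger statement is that $N$ is fixed in advance, and the key observation making the transfer possible is that a limit model is universal over its base: since $N$ is limit over $M$, every model in $\K_\mu$ extending $M$ embeds into $N$ over $M$.

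Concretely, I would first apply Lemma \ref{bad-succ-0} to $p$ to obtain a limit model $N_0 \in \K_\mu$ with $M \lea N_0$ together with two distinct bad types $r_1, r_2 \in \gS (N_0)$, both extending $p$. Using universality of $N$ over $M$, fix an embedding $g : N_0 \to N$ over $M$, so that $g[N_0]$ is a limit model with $M \lea g[N_0] \lea N$. By invariance, $g(r_1), g(r_2) \in \gS (g[N_0])$ are again distinct and bad, and since $g$ fixes $M$ and $p \in \gS (M)$ we have $g(p) = p$, so both $g(r_1)$ and $g(r_2)$ still extend $p$.

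It then remains to push $g(r_1)$ and $g(r_2)$ up to $N$ without losing distinctness or badness. Here I would use that every type over a limit model does not fork over that model: writing $g[N_0]$ as the union of its defining universal chain and applying the ``no long splitting chains'' clause produces, for each $i$, a base over which $g(r_i)$ does not fork, and $g[N_0]$ is universal over that base. By the extension property (Fact \ref{ext-uq}) each $g(r_i)$ then has an extension $q_i \in \gS (N)$ that does not fork over $g[N_0]$. Since $q_i \rest g[N_0] = g(r_i)$ and $g(r_1) \neq g(r_2)$, the types $q_1, q_2$ are distinct; both extend $p$; and by Remark \ref{bad-trans} (applied to the limit models $g[N_0] \lea N$) each $q_i$ is bad, since it does not fork over $g[N_0]$ and its restriction $g(r_i)$ is bad. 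These $q_1, q_2$ are the desired types.

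I expect the only real subtlety to be the \emph{direction} of the transfer. One cannot simply produce distinct bad types over some large model and restrict them to $N$, because restriction may collapse two distinct types into one, and badness is not obviously preserved downward. The argument sidesteps this by embedding the \emph{smaller} witnessing model $N_0$ into $N$ and extending \emph{upward} by nonforking: upward extension automatically preserves distinctness (the restrictions to $g[N_0]$ already differ) and preserves badness through Remark \ref{bad-trans}. Universality of the limit model $N$ over $M$ is exactly what licenses this upward embedding, and is the reason the fixed-$N$ version follows so quickly from Lemma \ref{bad-succ-0}.
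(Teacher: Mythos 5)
Your proof is correct and follows essentially the same route as the paper: apply Lemma \ref{bad-succ-0}, embed the witnessing model into $N$ over $M$ using universality of the limit model $N$, and then extend the two transported types upward by nonforking, invoking Remark \ref{bad-trans} for badness and the differing restrictions for distinctness. The paper's proof is exactly this argument, with your explicit unpacking of "no long splitting chains and extension" matching its parenthetical remark.
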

\begin{proof}
  By Lemma \ref{bad-succ-0}, there exists $N' \in \K_\mu$ limit with $M \lea N'$ and $q_1', q_2' \in \gS (N')$ distinct bad extensions of $p$. Use universality of $N$ to pick $f: N' \xrightarrow[M]{} N$. For $\ell = 1,2$, let $q_\ell'' := f (q_\ell')$. Clearly, $q_1''$, $q_2''$ are still distinct bad extensions of $p$. Now for $\ell = 1,2$, let $q_\ell \in \gS (N)$ be an extension of $q_\ell''$ that does not fork over $f[N']$ (use no long splitting chains and extension). Then $q_1$ and $q_2$ are as desired (they are bad because they are nonforking extensions of the bad types $q_1'', q_2''$, see Remark \ref{bad-trans}).
\end{proof}

\begin{lem}\label{no-bad}
  If $\K$ is $\mu$-superstable, then there are no bad types.
\end{lem}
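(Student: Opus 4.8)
The plan is to argue by contradiction. Assuming a bad type exists, I would build a full binary tree of bad types sitting over a single increasing continuous chain of limit models, and then extract from the branches more than $\mu$ pairwise distinct Galois types over the union of that chain, contradicting stability in $\mu$ (which is part of $\mu$-superstability, Definition \ref{ss-def}). The device that makes the whole tree live over one chain of models — rather than over a genuine tree of models — is precisely that Lemma \ref{bad-succ} fixes the extension $N$ in advance: at each successor step a single limit extension $N_{\beta+1}$ of $N_\beta$ can carry both children of every node at that level.

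Concretely, let $\kappa$ be the least cardinal with $2^\kappa > \mu$. By Cantor's theorem $2^\mu > \mu$, so $\kappa \le \mu$ and hence $\kappa < \mu^+$; and since $2^n \le \mu$ for every finite $n$, $\kappa$ is infinite, so a limit ordinal. Suppose toward a contradiction that some type is bad; by Definition \ref{bad-def} it lives over a limit model, so fix a limit $N_0 \in \K_\mu$ and a bad $p_{\seq{}} \in \gS(N_0)$. I would then construct by induction on $\alpha \le \kappa$ an increasing continuous chain $\seq{N_\alpha : \alpha \le \kappa}$ of limit models in $\K_\mu$ with $N_{\alpha+1}$ limit over $N_\alpha$, together with bad types $p_\eta \in \gS(N_\alpha)$ for every binary sequence $\eta$ of length $\alpha$, coherent along branches and with $p_{\eta^\frown \seq{0}} \ne p_{\eta^\frown \seq{1}}$ at each split. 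At a successor $\alpha = \beta+1$ I first pick one $N_{\beta+1}$ limit over $N_\beta$ (such exists since $\K$ is stable in $\mu$ with amalgamation and no maximal models), and then for each $\eta$ of length $\beta$ apply Lemma \ref{bad-succ} with this same fixed $N = N_{\beta+1}$, $M = N_\beta$, and $p = p_\eta$ to obtain two distinct bad types in $\gS(N_{\beta+1})$, which I declare to be $p_{\eta^\frown\seq{0}}$ and $p_{\eta^\frown\seq{1}}$. At a limit $\alpha$ (including $\alpha = \kappa$) I set $N_\alpha = \bigcup_{\beta<\alpha} N_\beta$, a limit model as the union of a universal chain; for each branch $\eta$ of length $\alpha$, Fact \ref{limit-fact} yields a unique $p_\eta \in \gS(N_\alpha)$ extending all $p_{\eta \rest \beta}$, and Lemma \ref{bad-limit} guarantees that $p_\eta$ is again bad. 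This is exactly where the two crucial lemmas are used in tandem.

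Since the chain $\seq{N_\alpha : \alpha \le \kappa}$ has length $\kappa+1 < \mu^+$, its union $N_\kappa$ lies in $\K_\mu$. For distinct branches $b \ne b'$ of length $\kappa$, let $\alpha$ be their first disagreement and $\eta$ their common restriction to $\alpha$; then $p_{b \rest (\alpha+1)}$ and $p_{b' \rest (\alpha+1)}$ are the two distinct children $p_{\eta^\frown\seq{0}} \ne p_{\eta^\frown\seq{1}}$ in $\gS(N_{\alpha+1})$, and since each $p_b$ restricts to $p_{b \rest (\alpha+1)}$ on $N_{\alpha+1}$, we get $p_b \ne p_{b'}$. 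Hence the branch types form a family of $2^\kappa > \mu$ pairwise distinct members of $\gS(N_\kappa)$, contradicting stability in $\mu$, so there can be no bad type.

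I expect the only genuinely delicate point to be the limit-stage bookkeeping — checking that along every branch the types cohere (Fact \ref{limit-fact}) and remain bad (Lemma \ref{bad-limit}), uniformly across all branches of a given length — together with the observation that because the models form a single chain of length $< \mu^+$ (and not a tree), the union stays in $\K_\mu$. Once the fixed-$N$ form of Lemma \ref{bad-succ} is in place, the choice of $\kappa$, the cardinal arithmetic, and the distinctness of the branch types are all routine.
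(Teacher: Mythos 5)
Your proposal is correct and follows essentially the same strategy as the paper's proof: build a binary tree of pairwise-splitting bad types over a single increasing chain of limit models, using Lemma \ref{bad-succ} at successors and Fact \ref{limit-fact} together with Lemma \ref{bad-limit} at limits, then contradict stability in $\mu$ by counting branch types. The only (harmless) difference is that you take the tree height to be the least $\kappa$ with $2^\kappa > \mu$, whereas the paper simply uses a chain of length $\mu$ and the inequality $2^\mu > \mu$.
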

\begin{proof}
  Suppose for a contradiction that there is a limit model $M$ in $\K_\mu$ and a bad type $p \in \gS (M)$. Fix an increasing continuous chain $\seq{M_i : i \le \mu}$ with $M_0 = M$ and $M_{i + 1}$ limit over $M_i$ for all $i < \mu$. We build a tree of types $\seq{p_\eta : \eta \in \fct{\le \mu}{2}}$ satisfying:

  \begin{enumerate}
  \item $p_{<>} = p$.
    \item For all $\eta \in \fct{\le \mu}{2}$, $p_\eta \in \gS (M_{\ell (\eta)})$.
    \item For all $\nu \le \eta \in \fct{\le \mu}{2}$, $p_\eta$ is an extension of $p_\nu$.
    \item For all $\eta \in \fct{\le \mu}{2}$, $p_\eta$ is bad.
    \item For all $\eta \in \fct{<\mu}{2}$, $p_{\eta \smallfrown 0} \neq p_{\eta \smallfrown 1}$.
  \end{enumerate}

  \underline{This is enough}: The requirements give that for all $\eta, \nu \in \fct{\mu}{2}$, $\eta \neq \nu$ implies $p_\eta \neq p_\nu$. Therefore $|\gS (M_\mu)| = 2^\mu > \mu$, contradicting stability.

  \underline{This is possible}: We proceed by induction on $\ell (\eta)$. The base case has already been specified. At limits, we use Fact \ref{limit-fact} and Lemma \ref{bad-limit}. At successors, we use Lemma \ref{bad-succ}.
\end{proof}

\begin{thm}[Uniqueness of forking]\label{main-thm}
  Assume that $\K$ is $\mu$-superstable. Let $M \lea N$ both be limits in $\K_\mu$. Let $p, q \in \gS (N)$. If $p \rest M = q \rest M$ and both $p$ and $q$ do not fork over $M$, then $p = q$.
\end{thm}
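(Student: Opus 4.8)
The plan is to derive the theorem directly from Lemma \ref{no-bad}, which asserts that under $\mu$-superstability there are no bad types. The entire ``bad type'' apparatus (Definition \ref{bad-def} together with Lemmas \ref{bad-limit}, \ref{bad-succ-0}, \ref{bad-succ}, and \ref{no-bad}) was set up precisely so that any failure of uniqueness manufactures a bad type. Consequently the final argument should be a one-line contradiction: once no bad types exist, uniqueness is forced.

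Concretely, I would argue by contradiction. Suppose $p \neq q$, and set $r := p \rest M = q \rest M \in \gS (M)$, recalling that $M$ is a limit model by hypothesis. I claim $r$ is $0$-bad in the sense of Definition \ref{bad-def}. Indeed, $N$ is a limit model in $\K_\mu$ with $M \lea N$, and the types $q_1 := p$ and $q_2 := q$ both lie in $\gS (N)$, both extend $r$, are distinct (by the contradiction hypothesis), and both do not fork over $M$ (by assumption). These are exactly the four clauses required for $0$-badness. Hence $r$ is $0$-bad, and therefore bad. But Lemma \ref{no-bad} says there are no bad types, a contradiction; so $p = q$.

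I do not expect any genuine obstacle at this last step: all the difficulty has already been isolated into Lemma \ref{no-bad}, whose proof rests on the tree construction contradicting stability in $\mu$, with Lemma \ref{bad-limit} handling limit stages and Lemma \ref{bad-succ} handling successor stages of that tree. If I were to locate the real ``hard part'' of the development, it would be the successor step in Lemma \ref{bad-succ-0}, where the conjugation property (Fact \ref{conj-prop}) is used to transport a pair of distinct bad extensions from over a smaller limit model $M_0$ back to a pair over $M$, exploiting the freedom to choose the witnesses $M_0'$, $M_1'$ and the auxiliary limit model $M^\ast$. For the theorem statement itself, however, nothing beyond unwinding the definition of $0$-bad and invoking Lemma \ref{no-bad} is needed.
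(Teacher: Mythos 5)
Your proposal is correct and is exactly the paper's own proof: a failure of uniqueness makes $p \rest M$ a $0$-bad type by definition, contradicting Lemma \ref{no-bad}. Your unwinding of the four clauses of $0$-badness and your assessment of where the real work lies (Lemmas \ref{bad-succ-0} and \ref{no-bad}) both match the paper.
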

\begin{proof}
  Otherwise, this would mean that $p \rest M$ is $0$-bad, contradicting Lemma \ref{no-bad}.
\end{proof}

\subsection{The hierarchy of symmetry properties collapses}

In \cite[\S4]{vv-symmetry-transfer-afml}, VanDieren and the author defined several variations of the symmetry property (we have highlighted the differences between each, see the previously-cited paper for more motivation):

\begin{defin}\label{many-syms} \
  \begin{enumerate}
  \item\label{many-syms-1} $\K$ has \emph{uniform $\mu$-symmetry} if for any limit models $N, M_0, M$ in $\K_\mu$ where $M$ is limit over $M_0$ and $M_0$ is limit over $N$, if \underline{$\gtp (b / M)$ does not $\mu$-split over $M_0$}, $a \in |M|$, and $\gtp (a / M_0)$  does not $\mu$-fork over $(N, M_0)$, there exists $M_b \in \K_{\mu}$ containing $b$ and limit over $M_0$ so that \textbf{$\gtp (a / M_b)$  does not $\mu$-fork over $\mathbf{(N, M_0)}$}.
    \item\label{many-syms-2} $\K$ has \emph{weak uniform $\mu$-symmetry} if for any limit models $N, M_0, M$ in $\K_\mu$ where $M$ is limit over $M_0$ and $M_0$ is limit over $N$, if \underline{$\gtp (b / M)$ does not $\mu$-fork over $M_0$}, $a \in |M|$, and $\gtp (a / M_0)$  does not $\mu$-fork over $(N, M_0)$, there exists $M_b \in \K_{\mu}$ containing $b$ and limit over $M_0$ so that \textbf{$\gtp (a / M_b)$ does not $\mu$-fork over $\mathbf{(N, M_0)}$}.
    \item\label{many-syms-3} $\K$ has \emph{non-uniform $\mu$-symmetry} if for any limit models $M_0, M$ in $\K_\mu$ where $M$ is limit over $M_0$, if \underline{$\gtp (b / M)$ does not $\mu$-split over $M_0$}, $a \in |M|$, and $\gtp (a / M_0)$ does not $\mu$-fork over $M_0$, there exists $M_b \in \K_{\mu}$ containing $b$ and limit over $M_0$ so that \textbf{$\gtp (a / M_b)$ does not $\mu$-fork over $\mathbf{M_0}$}.
    \item\label{many-syms-4} $\K$ has \emph{weak non-uniform $\mu$-symmetry} if for any limit models $M_0, M$ in $\K_\mu$ where $M$ is limit over $M_0$, if \underline{$\gtp (b / M)$ does not $\mu$-fork over $M_0$}, $a \in |M|$, and $\gtp (a / M_0)$ does not $\mu$-fork over $M_0$, there exists $M_b \in \K_{\mu}$ containing $b$ and limit over $M_0$ so that \textbf{$\gtp (a / M_b)$ does not $\mu$-fork over $\mathbf{M_0}$}.
  \end{enumerate}
\end{defin}

In \cite[\S4]{vv-symmetry-transfer-afml}, it was shown that the uniform variation corresponds to the symmetry property for splitting introduced by VanDieren in \cite{vandieren-symmetry-apal} and the weak non-uniform variation corresponds to the symmetry property of good frames (over limit models). It was also proven that $(\ref{many-syms-1}) \Leftrightarrow (\ref{many-syms-2}) \Rightarrow (\ref{many-syms-3}) \Rightarrow (\ref{many-syms-4})$. Using Theorem \ref{main-thm}, it is now easy to show that all these properties are equivalent.

\begin{cor}\label{sym-equiv-cor}
  If $\K$ is $\mu$-superstable, then uniform $\mu$-symmetry is equivalent to weak non-uniform $\mu$-symmetry.
\end{cor}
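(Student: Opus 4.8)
The plan is to close the cycle of implications established in \cite[\S4]{vv-symmetry-transfer-afml}, namely $(\ref{many-syms-1}) \Leftrightarrow (\ref{many-syms-2}) \Rightarrow (\ref{many-syms-3}) \Rightarrow (\ref{many-syms-4})$. This already gives that uniform $\mu$-symmetry implies weak non-uniform $\mu$-symmetry, so it suffices to prove the converse. Since $(\ref{many-syms-1})$ and $(\ref{many-syms-2})$ are known to be equivalent, I would establish the converse in the form $(\ref{many-syms-4}) \Rightarrow (\ref{many-syms-2})$; combined with the known implications this yields $(\ref{many-syms-2}) \Leftrightarrow (\ref{many-syms-3}) \Leftrightarrow (\ref{many-syms-4})$ and hence the full collapse, in particular the desired equivalence of uniform and weak non-uniform $\mu$-symmetry.

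First I would check that the hypotheses of $(\ref{many-syms-2})$ are at least as strong as those of $(\ref{many-syms-4})$. Given limit models $N, M_0, M$ as in $(\ref{many-syms-2})$ with $\gtp (b / M)$ not forking over $M_0$, $a \in |M|$, and $\gtp (a / M_0)$ not forking over $(N, M_0)$, monotonicity gives that $\gtp (a / M_0)$ does not fork over $M_0$. Thus $(\ref{many-syms-4})$ applies and produces a limit model $M_b$ over $M_0$ containing $b$ with $\gtp (a / M_b)$ not forking over $M_0$. The only gap between this and the conclusion of $(\ref{many-syms-2})$ is that the latter demands nonforking over the prescribed base $(N, M_0)$, not merely over $M_0$.

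The heart of the argument is to bridge this gap using the uniqueness property (Theorem \ref{main-thm}). Using the extension property (Fact \ref{ext-uq}), I would extend $\gtp (a / M_0)$, which by hypothesis does not fork over $(N, M_0)$, to a type $r \in \gS (M_b)$ that still does not fork over $(N, M_0)$. Now $r$ and $\gtp (a / M_b)$ are two types over the limit model $M_b$, both restricting to $\gtp (a / M_0)$ on the limit model $M_0 \lea M_b$, and both not forking over $M_0$ (for $r$ by monotonicity from nonforking over $(N, M_0)$, for $\gtp (a / M_b)$ by the output of $(\ref{many-syms-4})$). Theorem \ref{main-thm} then forces $r = \gtp (a / M_b)$, so $\gtp (a / M_b)$ inherits the nonforking over $(N, M_0)$ enjoyed by $r$. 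This is exactly the conclusion of $(\ref{many-syms-2})$.

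I expect the main difficulty to be bookkeeping rather than any deep obstacle: one must verify that all models involved are genuinely limit so that Theorem \ref{main-thm} applies, and that $(N, M_0)$ is a legitimate nonforking witness ($M_0$ is limit, hence universal, over $N$, which is part of the hypotheses). The conceptual content is simply that uniqueness makes the nonforking extension of $\gtp (a / M_0)$ to $M_b$ canonical, so the arbitrary witness produced by weak non-uniform symmetry can be replaced by the prescribed $N$.
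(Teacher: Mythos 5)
Your proposal is correct and follows essentially the same route as the paper: reduce to showing weak non-uniform $\mu$-symmetry implies weak uniform $\mu$-symmetry (using the known equivalence of the uniform and weak uniform variants), apply weak non-uniform symmetry to get $M_b$, extend $\gtp(a/M_0)$ to a nonforking (over $(N,M_0)$) type on $M_b$, and invoke Theorem \ref{main-thm} to identify that extension with $\gtp(a/M_b)$. The only difference is cosmetic: the paper realizes the extended type by an element $a'$ rather than naming the type $r$ directly.
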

\begin{proof}
  We show that weak non-uniform $\mu$-symmetry implies weak uniform $\mu$-symmetry, which is known to be equivalent to uniform $\mu$-symmetry \cite[4.6]{vv-symmetry-transfer-afml}. So assume that we are given $N, M_0, M, a, b$ as in the definition of weak uniform $\mu$-symmetry. Let $M_b$ be as given by the definition of weak \emph{non-uniform} $\mu$-symmetry. We know that $\gtp (a / M_b)$ does not $\mu$-fork over $M_0$, but we really want to conclude that it does not $\mu$-fork over $(N, M_0)$.

  By assumption, $\gtp (a / M_0)$ does not $\mu$-fork over $(N, M_0)$. Therefore by extension there is $a'$ such that $\gtp (a/ M_0) = \gtp (a' / M_0)$ and $\gtp (a' / M_b)$ does not $\mu$-fork over $(N, M_0)$. We have that $\gtp (a / M_b)$, $\gtp (a' / M_b)$ both do not $\mu$-fork over $M_0$ therefore by uniqueness (Theorem \ref{main-thm}), $\gtp (a / M_b) = \gtp (a' / M_b)$. In particular, $\gtp (a / M_b)$ does not $\mu$-fork over $(N, M_0)$, as desired.
\end{proof}

\bibliographystyle{amsalpha}
\bibliography{uq-forking}

\providecommand{\bysame}{\leavevmode\hbox to3em{\hrulefill}\thinspace}
\providecommand{\MR}{\relax\ifhmode\unskip\space\fi MR }
\providecommand{\MRhref}[2]{%
  \href{http://www.ams.org/mathscinet-getitem?mr=#1}{#2}
}
\providecommand{\href}[2]{#2}
\begin{thebibliography}{BGVV17}

\bibitem[Bal]{baldwinbook09-errata-0316}
John~T. Baldwin, \emph{Categoricity: Errata}, Available online. March 1, 2016
  version. URL: \url{http://www.math.uic.edu/~jbaldwin/pub/bookerrata.pdf}.

\bibitem[Bal09]{baldwinbook09}
\bysame, \emph{Categoricity}, University Lecture Series, vol.~50, American
  Mathematical Society, 2009.

\bibitem[BGVV17]{shvi-notes-apal}
Will Boney, Rami Grossberg, Monica VanDieren, and Sebastien Vasey,
  \emph{Superstability from categoricity in abstract elementary classes},
  Annals of Pure and Applied Logic \textbf{168} (2017), no.~7, 1383--1395.

\bibitem[Gro02]{grossberg2002}
Rami Grossberg, \emph{Classification theory for abstract elementary classes},
  Contemporary Mathematics \textbf{302} (2002), 165--204.

\bibitem[GV]{gv-superstability-v5-toappear}
Rami Grossberg and Sebastien Vasey, \emph{Equivalent definitions of
  superstability in tame abstract elementary classes}, The Journal of Symbolic
  Logic, To appear. URL: \url{http://arxiv.org/abs/1507.04223v5}.

\bibitem[GVV16]{gvv-mlq}
Rami Grossberg, Monica VanDieren, and Andr{\'e}s Villaveces, \emph{Uniqueness
  of limit models in classes with amalgamation}, Mathematical Logic Quarterly
  \textbf{62} (2016), 367--382.

\bibitem[JS]{jash940-v1}
Adi Jarden and Saharon Shelah, \emph{Non forking good frames without local
  character}, Preprint. URL: \url{http://arxiv.org/abs/1105.3674v1}.

\bibitem[She]{ShF841}
Saharon Shelah, \emph{On ${H}$-almost good $\lambda$-frames: more on
  [{SH}:838]}, In preparation. Paper number F841 on Shelah's list.

\bibitem[She99]{sh394}
\bysame, \emph{Categoricity for abstract classes with amalgamation}, Annals of
  Pure and Applied Logic \textbf{98} (1999), no.~1, 261--294.

\bibitem[She09a]{shelahaecbook}
\bysame, \emph{Classification theory for abstract elementary classes}, Studies
  in Logic: Mathematical logic and foundations, vol.~18, College Publications,
  2009.

\bibitem[She09b]{shelahaecbook2}
\bysame, \emph{Classification theory for abstract elementary classes 2},
  Studies in Logic: Mathematical logic and foundations, vol.~20, College
  Publications, 2009.

\bibitem[SV99]{shvi635}
Saharon Shelah and Andr{\'e}s Villaveces, \emph{Toward categoricity for classes
  with no maximal models}, Annals of Pure and Applied Logic \textbf{97} (1999),
  1--25.

\bibitem[Van06]{vandierennomax}
Monica VanDieren, \emph{Categoricity in abstract elementary classes with no
  maximal models}, Annals of Pure and Applied Logic \textbf{141} (2006),
  108--147.

\bibitem[Van16]{vandieren-symmetry-apal}
\bysame, \emph{Superstability and symmetry}, Annals of Pure and Applied Logic
  \textbf{167} (2016), no.~12, 1171--1183.

\bibitem[Vasa]{categ-saturated-v3}
Sebastien Vasey, \emph{Saturation and solvability in abstract elementary
  classes with amalgamation}, Preprint. URL:
  \url{http://arxiv.org/abs/1604.07743v3}.

\bibitem[Vasb]{ap-universal-v11-toappear}
\bysame, \emph{Shelah's eventual categoricity conjecture in universal classes:
  part {I}}, Annals of Pure and Applied Logic, To appear. DOI:
  10.1016/j.apal.2017.03.003. URL: \url{http://arxiv.org/abs/1506.07024v11}.

\bibitem[Vas16a]{indep-aec-apal}
\bysame, \emph{Building independence relations in abstract elementary classes},
  Annals of Pure and Applied Logic \textbf{167} (2016), no.~11, 1029--1092.

\bibitem[Vas16b]{ss-tame-jsl}
\bysame, \emph{Forking and superstability in tame {A}{E}{C}s}, The Journal of
  Symbolic Logic \textbf{81} (2016), no.~1, 357--383.

\bibitem[Vas17]{categ-universal-2-selecta}
\bysame, \emph{Shelah's eventual categoricity conjecture in universal classes:
  part {I}{I}}, Selecta Mathematica \textbf{23} (2017), no.~2, 1469--1506.

\bibitem[VV17]{vv-symmetry-transfer-afml}
Monica VanDieren and Sebastien Vasey, \emph{Symmetry in abstract elementary
  classes with amalgamation}, Archive for Mathematical Logic \textbf{56}
  (2017), no.~3, 423--452.

\end{thebibliography}

\end{document}